\newtheorem{theorem}{Theorem}[section]
\newtheorem{proposition}[theorem]{Proposition}
\newtheorem{corollary}[theorem]{Corollary}
\theoremstyle{definition}
\newtheorem{definition}[theorem]{Definition}
\theoremstyle{remark}
\newtheorem{remark}[theorem]{Remark} 
\newcommand{\inv}{^{\raisebox{.2ex}{$\scriptscriptstyle-1$}}}   
\begin{document} 
\author{Amartya Goswami}

\address{Department of Mathematics and Applied Mathematics\\
University of Johannesburg, South Africa}
\email{agoswami@uj.ac.za}
\title{$L$-homologies of double complexes}

\begin{abstract}   
The notion of $L$-homologies  (of double complexes) as proposed in this paper extends the notion of classical horizontal and vertical homologies, along with two other new homologies introduced in the homological  diagram lemma  called salamander lemma.  We enumerate all $L$-homologies associated with an object of a double complex and   provide new examples of exact sequences. We describe a classification problem of these exact sequences.   We study two poset structures on these $L$-homologies; one of them determines the trivialities of horizontal and vertical homologies of an object in terms of other $L$-homologies of that object, whereas the second structure shows the significance of the two homologies introduced in salamander lemma. Finally, we prove the existence of a faithful amnestic Grothendieck fibration from the category of $L$-homologies to a category consisting of objects and morphisms of a given double complex.  \end{abstract}

\makeatletter
\@namedef{subjclassname@2020}{%
\textup{2020} Mathematics Subject Classification}
\makeatother

\subjclass[2020]{Primary 55U05 secondary 18G35; 06A12}
\keywords{Double complexes; exact sequences;  posets; Grothendieck fibrations} 

  
\maketitle   
 
 \section{Introduction}            
In studying double complexes, we come across with horizontal and vertical homologies associated with objects of double complexes. It is in \cite{B12}, where two new homologies have been introduced to formulate a new homological diagram lemma for abelian categories. We observe that all these four types of homologies are of the form $U/V,$ where  $V\leqslant U$ follows from the properties of double complexes, and $U,$ $V$ are joins/meets of kernels and images respectively.  Given an object $A$ of a double complex, we notice that the images of all morphisms with codomain $A$ and kernels of all  morphisms with domain $A$ form two lattices. Based on these observations, we define an \emph{$L$-complex} and hence an \emph{$L$-homology} $U/V$ (see Definition \ref{def1}) having $U$ and $V$ as elements of the above mentioned lattices.  We obtain a complete list of such $L$-homologies associated with an object of a double complex (see Proposition \ref{ho2}). 
     
These $L$-homologies are accompanied with two partial order relations. One of these, (see Definition \ref{binary1}) on the set of $L$-homologies of an object,  forms a join-semilattice with a top element (see Theorem \ref{slt}) and trivialities of other $L$-homologies imply trivial horizontal and vertical homologies of that object (see Corollary \ref{pocor}). The second relation is based on canonical morphisms (in the sense of Proposition \ref{pro1}) between $L$-homologies, and the set of $L$-homologies of an object under this relation forms a bounded poset.  Using a sufficient condition of exactness of a sequence of subquotients,  we give several examples of exact sequences of $L$-homologies formulating in terms of this second partial order relation (see Proposition \ref{eexact}) and discuss about the classification problem of those exact sequences.

Finally, for a given double complex, we construct a category of  $L$-homologies of objects of the double complex and define a functor from it to a category whose objects and morphisms are the same as that of the given double complex along with the identity morphisms. This functor is a Grothendieck fibration in the sense of \cite{G59}. It is also   faithful, amnestic and has a left adjoint and a right adjoint. 
 
The key idea of this paper is  the notion of $L$-homologies, which generalizes the existing notion of homologies of double complexes and gives a way to construct exact sequences. The structures on these $L$-homologies help to understand inter-relation between them.   
\section{Preliminaries}\label{two}    
In this section, we recall some of the definitions and properties related to subobjects and subquotients in abelian categories. 
For a fuller treatment  we refer the reader to \cite{B94}, \cite{F64}, \cite{M65}, \cite{P70} and \cite{M98}.  

Let $\mathds{A}$ be an well-powered abelian category. The image, written as  $\mathsf{Im}f,$ of a morphism $f\colon A\to B$ in $\mathds{A}$ is the smallest subobject  of $B$ such that $f$ factors through the representing monomorphism $m_{\mathsf{Im}f}\colon \mathsf{Im}f\to B.$  Every morphism $f\colon A \to B$ in $\mathds{A}$ has an epi-mono factorization: $A\xrightarrow{e}\mathsf{Im}f\xrightarrow{m}B.$ A sequence of composible morphisms $A\xrightarrow{f}B\xrightarrow{g} C$ is called exact at $B$ if $\mathsf{Im}f=\mathsf{Ker}g$.
 
The set of subobjects of an object $A$ in $\mathds{A}$  forms a bounded modular lattice $(\mathsf{Sub}({A}), \leqslant , \vee, \wedge, \bot, \top)$, where for two subobjects $S$ and $S'$ of $A,$ $S\leqslant S'$ if there exists a monomorphism $\phi\colon S\to S'$ such that the monomorphism $m_S\colon S\to A$ factors through the monomorphism $m_{S'}\colon S'\to A$ by $\phi,$ i.e. $m_S=m_{S'}\phi,$
the union $S\vee S'$ of  $S$ and $S'$ is the image of the morphism $S\sqcup S'\to A,$
the intersection $S\wedge S'$ of  $S$ and $S'$ is the pullback of the morphisms $m_S\colon S\to A$ and $m_{S'}\colon S'\to A,$
the smallest subobject (or the bottom element)  $\bot$ of $\mathsf{Sub}(A)$ is the monomorphism $0\to A,$
the biggest subobject (or the top element) $\top$ of $\mathsf{Sub}(A)$ is the monomorphism $A\to A,$ and the modularity law means that for any three subobjects $X,$ $Y,$ $Z$  of $A,$  and $X\!\leqslant\! Z$ implies $X\vee (Y\wedge Z)= (X\vee Y)\wedge Z.$ 

Every morphism $f\colon A \to B$ in $\mathds{A}$ induces an adjunction between $\mathsf{Sub}(A)$ and $\mathsf{Sub}(B)$ as follows: 
$$
\xymatrix@C=3pc{
\mathsf{Sub}(A)\ar@/^/[r]^{f} \ar@{}@<1pt>[r]|{\bot} &	\mathsf{Sub}(B),\ar@/^/[l]^{f\inv}
}
$$
where, the direct image functor $f$ is defined on a subobject $S$ of $A$ as the image of the composite $S\to A\to  B,$ whereas for a subobject $T$ of $B$ the inverse image functor is defined as the pullback of the morphisms $m_T\colon T\to B$ and $f\colon A\to B.$ From the functoriality, we have $f(S)\leqslant f(S')$ and $f\inv (T)\leqslant f\inv (T')$ whenever $S\leqslant S'$ and $T\leqslant T'$ for subobejcts $S,$ $S'$ of $A$ and subobjects $T,$ $T'$ of $B.$    For a  subobject $S$ of $A$ and a subobject $T$ of $B,$ by adjunction we obtain $f(S)\leqslant T$ if and only if $S\leqslant f\inv(T).$ For subobjects $S,$ $S'$ of $A$ and subobjects $T,$ $T'$ of $B,$ we have  $f(S\vee S')=f(S) \vee f(S')$ and $f\inv(T\wedge T')=f\inv(T)\wedge f\inv(T').$ The direct and inverse image functors are related to each other as  $f\inv f(S)=S\vee \mathsf{Ker}f$ and $ff\inv(T)=T\wedge \mathsf{Im}f,$ where $S$ is a subobject of $A$ and $T$ is a subobject of $B.$

The following proposition is about existence of canonical morphisms and about holding certain commutative diagrams between subquotients. The hypothesis and the conclusions of this proposition is going to play a major role for rest of our work.  
\begin{proposition}\label{pro1}
In the diagram below: 
\begin{equation}  
\vcenter{
 \tag{1}  
\label{eq:1} 
\xymatrix@C=.5pc@R=.3pc
{
& W\!/\!R & & & X\!/\!S\ar@{..>}[lll]_(.4){h''}
\\
& & & &
\\
V\!/\!Q\ar@{..>}[uur]^(.5){f''}
& & & U\!/\!P\ar@{..>}[lll]_(.3){e''} \ar@{..>}[uur]_(.5){\!\!g''}
\\
& W\ar@{>->}'[]+<0ex,-2.5ex>;[dd]^(.4){\!\iota_W}[ddd]\ar@{->>}'[u][uuu]_(.4){\!\pi_R} & & & X\ar@{>->}[]+<0ex,-2.5ex>;[ddd]^(.3){\iota_X}\ar@{->>}[uuu]_(.4){\!\pi_S}\ar@{..>}'[l][lll]_(.4){h'}
\\
& & & &
\\
V\ar@{>->}[]+<0ex,-2.5ex>;[ddd]_(.4){\iota_V}\ar@{->>}[uuu]^{\pi_Q}\ar@{..>}[uur]^{f'}
& & & U\ar@{>->}[]+<0ex,-2.5ex>;[ddd]^(.4){\!\iota_U}\ar@{->>}[uuu]_(.4){\!\pi_P}\ar@{..>}[lll]_(.3){e'} \ar@{..>}[uur]_{\!\!\!g'}
\\
& C   & & & D \ar'[l][lll]_(.3){h}
\\
& & & &
\\
B \ar[uur]^{f} & &  & A \ar[uur]_{\!\!g}\ar[lll]^{e}
}} 
\end{equation} 
$\mathrm{(a)}$ let $e\colon A\to B$ be a morphism in $\mathds{A},$ and let $U, P$ be subobjects of $A$ with $P\leqslant U$.  Let $V, Q$  be subobjects of $B$ with $Q\leqslant V$. If $e(P)\leqslant Q$ and $U\leqslant e\inv(V)$ then there are  canonical morphisms $e'\colon U \to V$ and $e''\colon U/P\to V/Q$ such that
$
\iota_Ve'=e\iota_U$ and $e''\pi_P=\pi_Qe'.
$
   
$\mathrm{(b)}$ Let  $P\leqslant U$ be subobjects of $A,$  $Q\leqslant V$ be subobjects of $B,$  $R\leqslant W$ be subobjects of $C,$  $S\leqslant X$ be subobjects of $X.$   Let 
$e(P)\leqslant\! Q,$ $U\leqslant e\inv(V);$ $e(Q)\leqslant R,$ $V\leqslant f\inv(W);$ $g(P)\leqslant S,$ $U\leqslant g\inv(X);$ $h(S)\leqslant R,$ $X\leqslant h\inv(W),$ and $fe=hg.$ Then there are canonical morphisms $e', f', g', h'$ and $e'', f'', g'', h''$ such that
$f'e'=h'g'$ and $f''e''=h''g''.$

$\mathrm{(c)}$ Let $A\xrightarrow {e} B\xrightarrow{f} C$ be composible morphisms in $\mathds{A}.$ If
$U,$ $P$ are subobjects of $A$ with $P\leqslant U;$ $V,$ $Q$ are subobjects of $B$ with $Q\leqslant V;$ $W,$ $R$ are subobjects of $C$ with $R\leqslant W;$ and  
$e(P)\leqslant Q,$ $U\leqslant e\inv(V);$ $f(Q)\leqslant R,$ $V\leqslant f\inv(W);$ $g(P)\leqslant S,$ $U\leqslant g\inv(X),$ then the sequence $U/P\to V/Q\to W/R$ is exact if 
\begin{equation}
e(U)  \vee  Q = f\inv(R)\wedge  V.
\label{ext}
\end{equation}
\end{proposition}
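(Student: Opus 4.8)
The plan is to apply part (a) twice to produce the canonical morphisms, and then to verify exactness at $V/Q$ by computing $\mathsf{Im}e''$ and $\mathsf{Ker}f''$ explicitly as subobjects of $V/Q$ and comparing them. Applying (a) to $e$ with $e(P)\leqslant Q$ and $U\leqslant e\inv(V)$ yields $e'\colon U\to V$ and $e''\colon U/P\to V/Q$ with $\iota_Ve'=e\iota_U$ and $e''\pi_P=\pi_Qe'$; applying (a) to $f$ with $f(Q)\leqslant R$ and $V\leqslant f\inv(W)$ yields $f'\colon V\to W$ and $f''\colon V/Q\to W/R$ with $\iota_Wf'=f\iota_V$ and $f''\pi_Q=\pi_Rf'$. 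The main bookkeeping tool is that, since $\mathsf{Ker}\pi_Q=Q$, the relations $\pi_Q\pi_Q\inv(Z)=Z$ and $\pi_Q\inv\pi_Q(Y)=Y\vee Q$ (the instances of $ff\inv(T)=T\wedge\mathsf{Im}f$ and $f\inv f(S)=S\vee\mathsf{Ker}f$ for $f=\pi_Q$) make $\pi_Q$ and $\pi_Q\inv$ mutually inverse order isomorphisms between the interval $[Q,V]\subseteq\mathsf{Sub}(V)$ and $\mathsf{Sub}(V/Q)$; hence two subobjects of $V/Q$ coincide iff the elements of $[Q,V]$ they correspond to coincide.

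First I would compute the image. Since $\pi_P$ is epi, $\mathsf{Im}e''=\mathsf{Im}(e''\pi_P)=\mathsf{Im}(\pi_Qe')$, and from $\iota_Ve'=e\iota_U$ with $\iota_V$ monic one gets $\mathsf{Im}e'=e(U)$ inside $V$. Therefore $\mathsf{Im}e''=\pi_Q(e(U))$, which corresponds to $e(U)\vee Q\in[Q,V]$ under the isomorphism above.

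The kernel is the more delicate step, and it is where the remaining hypotheses are consumed. From $f''\pi_Q=\pi_Rf'$ and $\mathsf{Ker}(gh)=h\inv(\mathsf{Ker}g)$ one obtains $\pi_Q\inv(\mathsf{Ker}f'')=\mathsf{Ker}(f''\pi_Q)=\mathsf{Ker}(\pi_Rf')=(f')\inv(R)$, and the restriction identity $(f')\inv(R)=f\inv(R)\wedge V$ follows from contravariant functoriality of inverse images along $\iota_Wf'=f\iota_V$: namely $(f')\inv(R)=(f')\inv(\iota_W\inv(R))=(\iota_Wf')\inv(R)=(f\iota_V)\inv(R)=\iota_V\inv(f\inv(R))=f\inv(R)\wedge V$, using $R=\iota_W\inv(R)$ (valid as $R\leqslant W$) and $\iota_V\inv(Y)=Y\wedge V$. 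Applying $\pi_Q$ and using $\pi_Q\pi_Q\inv=\mathrm{id}$ gives $\mathsf{Ker}f''=\pi_Q(f\inv(R)\wedge V)$, corresponding to $(f\inv(R)\wedge V)\vee Q=f\inv(R)\wedge V$ in $[Q,V]$, the last equality holding because $Q\leqslant V$ and $f(Q)\leqslant R$ force $Q\leqslant f\inv(R)\wedge V$.

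Exactness at $V/Q$ is the equality $\mathsf{Im}e''=\mathsf{Ker}f''$, which by the order isomorphism $[Q,V]\cong\mathsf{Sub}(V/Q)$ is equivalent to $e(U)\vee Q=f\inv(R)\wedge V$, i.e. to condition \eqref{ext}; hence \eqref{ext} yields the asserted exactness. The only genuinely delicate point is the kernel side — the identities $\mathsf{Ker}(\pi_Rf')=(f')\inv(R)$ and $(f')\inv(R)=f\inv(R)\wedge V$ — which is precisely where $V\leqslant f\inv(W)$ (so that $f'$ exists) and $f(Q)\leqslant R$ (so that $Q\leqslant f\inv(R)\wedge V$) get used; the image side is comparatively routine.
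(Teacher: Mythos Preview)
Your argument treats (a) as already established and proves only part (c); for (c) it is correct and follows exactly the paper's line---identify $\mathsf{Im}\,e''=(e(U)\vee Q)/Q$ and $\mathsf{Ker}\,f''=(f\inv(R)\wedge V)/Q$ and compare---only you supply the justifications (via the correspondence $[Q,V]\cong\mathsf{Sub}(V/Q)$ and the restriction identity $(f')\inv(R)=f\inv(R)\wedge V$) that the paper leaves implicit. If you are also expected to prove (a) and (b), those are missing, but they are the routine universal-property/diagram-chase steps and the paper's own proofs of them are immediate.
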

\begin{proof}
(a) Since $U\leqslant e\inv(V)$ implies $e(U)\leqslant V,$ by the universal property of $\iota_V,$ there exists a unique morphism $e'\colon U\to V$ such that $\iota_Ve'=e\iota_U,$ and	since $e(P)\leqslant Q,$ by the universal property of $\pi_Q,$ there exists a unique morphism $e''\colon U/P\to V/Q$ such that $e''\pi_P=\pi_Qe'.$

(b) Applying the argument of (a) on the hypothesis, we obtain the canonical morphisms $e',$ $f',$ $g',$ $h',$  $e'',$ $f'',$ $g'',$ $h'',$ and the identities:
$$
\iota_Ve'=e\iota_U,\, e''\pi_P=\pi_Qe';\; \iota_Wf'=f\iota_V,\, f''\pi_Q=\pi_Rf';
$$
$$
\iota_Wh'=h\iota_X,\, h''\pi_S=\pi_Rh';\;
\iota_Xg'=g\iota_U,\,  g''\pi_P=\pi_Sg'.
$$ 
To obtain the identity $f'e'=h'g',$ we notice that
$$\iota_Wf'e'=f\iota_Ve'=fe\iota_U=hg\iota_U=h\iota_Xg'=\iota_Wh'g',$$
and since $\iota_W$ is a monomorphism, we have the desired identity. Finally, to obtain  $f''e''=h''g'',$ we observe that
$$f''e''\pi_P =f''\pi_Qe'= \pi_Rf'e'=\pi_Rh'g'=h''\pi_Sg'=h''g''\pi_P,$$
and since $\pi_P$ is an epimorphism, we have the desired commutativity. 

(c) The existence of the canonical morphisms $U/P\to V/Q$ and $V/Q\to W/R$  follow from (b). To have the condition (\ref{ext}),  we observe that the image of the morphism $U/P\to V/Q$ is $(e(U)\vee Q)/Q$ while the kernel of the morphism $V/Q\to W/R$ is $(f\inv(R)\wedge V)/Q.$ 
\end{proof}    
\section {$L$-homologies}    
Let us consider a portion of a double complex with objects and  morphisms as shown in the diagram below:
\begin{equation}\label{dc}
\vcenter{
\xymatrix@=1.5pc{ & &\ar@{.}[d]  &\ar@{.}[d]\\&\bullet\ar@{.}[l]\ar[r]^{x}\ar[dr]^{w}& \bullet\ar[d]^{m} & \ar[d]\\
\ar@{.}[r] & G\ar[r]^{a}\ar[dr]^{p} & C \ar[dr]^{r} \ar[r]\ar[d]^{c} & \bullet\ar[d]^{v}\ar[r]\ar[dr]^{t} &\bullet\ar[d] \ar@{.}[r] &\\
\ar@{.}[r] &	F \ar[r]_{d} & A \ar[r]^{e}\ar[d]_{f} \ar[dr]^{\!q} & B\ar[r]^{b}\ar[dr]^{s} \ar[d]^{g}& E\ar[r]^{h}\ar[d]\ar[dr]^{u} &\bullet\ar@{.}[r] &\\
\ar@{.}[r] & \ar[r]	&I\ar[r]\ar@{.}[d] & D\ar[r]\ar@{.}[d] & \bullet\ar[r]\ar@{.}[d] & \bullet \ar@{.}[r] &\\
&&&& 
}} 
\end{equation}
where $w\!=\!mx,$ $p\!=\!ca,$ $r\!=\!ec,$  $q\!=\!ge,$ $t\!=\!bv.$ For every object $A$ of (\ref{dc}), consider the diagram of canonical monomorphisms induced by the double complex structure: 
\begin{equation}\label{a} 
\begin{gathered} 
\newdir{(>}{{}*!/-5pt/\dir{>}}
\xymatrix@C=.2pc@R=.3pc
{
&\mathsf{Ker}e\vee \mathsf{Ker}f\ar@{(>->}[dr] & & & \mathsf{Ker}f\;\;\ar@{(>->}[dll]\ar@{(>->}[lll]
\\
& &\mathsf{Ker}q & &
\\
\mathsf{Ker}e\ar@{(>->}[urr]\ar@{(>->}[uur]
& & &\mathsf{Ker}e\wedge \mathsf{Ker}f\ar@{(>->}[ul]\ar@{(>->}[lll] \ar@{(>->}[uur]
\\
& \mathsf{Im}c\vee \mathsf{Im}d \ar@{(>->}'[u][uuu]  & & &\mathsf{Im}c\ar@{(>->}[uuu] \ar@{(>->}'[l][lll] 
\\
& &\mathsf{Im}p\;\ar@{(>->}[ul]\;\ar@{(>->}[urr]\ar@{(>->}[dll]\ar@{>->}[dr]& &
\\
\mathsf{Im}d \ar@{(>->}[uur]\ar@{(>->}[uuu] & &  &\mathsf{Im}c\wedge  \mathsf{Im}d \ar@{(>->}[uur]\ar@{(>->}[lll]\ar@{(>->}[uuu]
}
\end{gathered}  
\end{equation} 
\textit{Notation}. We denote the  lattice generated by the subobjects $\mathsf{Ker}e,$  $\mathsf{Ker}f,$ and $\mathsf{Ker}q$ by $\mathsf{L}_{\mathsf{Ker}};$ while   $\mathsf{L}_{\mathsf{Im}}$ denotes the lattice generated by the subobjects $\mathsf{Im}c,$  $\mathsf{Im}d,$ and $\mathsf{Im}p.$ 
\begin{definition}  
An \emph{$L$-complex} associated with an object $A$ of a double complex $(\ref{dc})$ is an ordered pair of sets $(M_c,M_d)$ of morphisms such that (i) $A$ is the codomain for each $f\in M_c,$ (ii) $A$ is the domain for each $g\in M_d,$ and for all $f\in M_c,$ $g\in M_d,$ $\mathsf{Im}f \leqslant \mathsf{Ker}g.$  \end{definition} 
\begin{definition}\label{def1}  
An \emph{$L$-homology}   of an object $A$ of a double complex $(\ref{dc})$ is  a triple $(U, V, U\to U/V)$ such that
$U\in\mathsf{L}_{\mathsf{Ker}},$ 
$V\in\mathsf{L}_{\mathsf{Im}},$ and 
$V\leqslant U.$  We say an $L$-homology $U/V$ is \emph{trivial} if $U/V$ is a zero object.  
\end{definition} 
\textit{Notation}. For simplicity, we use the notation $U/V$ to denote an $L$-homology  instead of the triple as mentioned in Definition \ref{def1}.  We denote an $L$-homology and the set of $L$-homologies of an object $A$ of a double complex $\mathfrak{D}$ by $H_A$ and $\mathsf{LH}_A$  respectively. The set of all $L$-homologies of  $\mathfrak{D}$ will be denoted by $\mathsf{LH}_{\mathfrak{D}}.$ 

\begin{proposition}\label{ho2}	
For every object $A$ of a double complex $(\ref{dc}),$ there are eighteen $L$-homologies associated with fourteen $L$-complexes as described in the Table\,\ref{tabA} below: 
\end{proposition}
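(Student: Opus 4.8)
The plan is to reduce the statement to (i) a short list of containments among the six subobjects attached to $A$, all forced by the double‑complex axioms, and (ii) a finite enumeration over the two generated lattices. First I would record that the only morphisms with codomain $A$ in $(\ref{dc})$ are $c,d$ and the diagonal $p=ca$, and the only ones with domain $A$ are $e,f$ and the diagonal $q=ge$; hence $\mathsf{L}_{\mathsf{Im}}$ is generated by $\mathsf{Im}c,\mathsf{Im}d,\mathsf{Im}p$ and $\mathsf{L}_{\mathsf{Ker}}$ by $\mathsf{Ker}e,\mathsf{Ker}f,\mathsf{Ker}q$. By Definition \ref{def1} an $L$-homology is a pair $(U,V)$ with $U\in\mathsf{L}_{\mathsf{Ker}}$, $V\in\mathsf{L}_{\mathsf{Im}}$ and $V\leqslant U$, so once the two posets are pinned down the count is purely combinatorial.

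Next I would establish every relation displayed in $(\ref{a})$. The containments $\mathsf{Im}d\leqslant\mathsf{Ker}e$ and $\mathsf{Im}c\leqslant\mathsf{Ker}f$ are the vanishing of consecutive composites $ed=0$, $fc=0$; the relations $\mathsf{Im}p\leqslant\mathsf{Im}c$ and $\mathsf{Ker}e\leqslant\mathsf{Ker}q$ hold because $p=ca$ factors through $c$ and $q=ge$ factors $e$; and $\mathsf{Im}d\leqslant\mathsf{Ker}q$, $\mathsf{Im}p\leqslant\mathsf{Ker}f$ follow from $qd=ged=0$ and $fp=fca=0$. The two genuinely non‑formal containments are $\mathsf{Im}c\leqslant\mathsf{Ker}q$ and $\mathsf{Im}p\leqslant\mathsf{Ker}e$: for these I would use the commutativity of the square of $(\ref{dc})$ at $A$ to rewrite $ec$ (which is the named diagonal $r$) as the vertical arrow into $B$ precomposed with the horizontal arrow out of $C$, so that $qc=gec=0$ because the vertical arrow into $B$ composed with $g$ vanishes, and $ep=eca=0$ because the horizontal composite $G\to C\to\bullet$ vanishes. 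Together these give $\mathsf{Im}p\leqslant\mathsf{Ker}e\wedge\mathsf{Ker}f$ and $\mathsf{Im}c\vee\mathsf{Im}d\leqslant\mathsf{Ker}q$, so in particular the receptor and donor of the salamander lemma are well defined, and the relations identify $\mathsf{L}_{\mathsf{Ker}}$ and $\mathsf{L}_{\mathsf{Im}}$ with the five‑element structures drawn in $(\ref{a})$.

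With the posets fixed, I would run through the five choices of $V$ and use transitivity through the relations just proved to list the admissible $U$. Both $\mathsf{Im}c\wedge\mathsf{Im}d$ and $\mathsf{Im}p$ lie below all five elements of $\mathsf{L}_{\mathsf{Ker}}$; $\mathsf{Im}d$ lies below $\mathsf{Ker}e,\mathsf{Ker}q,\mathsf{Ker}e\vee\mathsf{Ker}f$; $\mathsf{Im}c$ below $\mathsf{Ker}f,\mathsf{Ker}q,\mathsf{Ker}e\vee\mathsf{Ker}f$; and $\mathsf{Im}c\vee\mathsf{Im}d$ below $\mathsf{Ker}q$ and $\mathsf{Ker}e\vee\mathsf{Ker}f$. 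This yields $5+3+5+3+2=18$ admissible pairs, i.e. eighteen $L$-homologies. I would then index each homology by the morphism sets generating $U$ and $V$: every element of $\mathsf{L}_{\mathsf{Ker}}$ is the kernel of a single arrow or the meet/join of the pair $\{e,f\}$, and similarly on the image side with $\{c,d\}$ in place of $\{e,f\}$. Grouping the eighteen pairs by these generating subsets of $\{c,d,p\}$ and $\{e,f,q\}$ collapses them onto fourteen $L$-complexes — eleven carrying a single homology, two (namely $(\{c,d\},\{q\})$ and $(\{p\},\{e,f\})$) carrying two, and one ($(\{c,d\},\{e,f\})$) carrying three — which is exactly the content of Table \ref{tabA}.

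The hard part will be the two commuting‑square containments $\mathsf{Im}c\leqslant\mathsf{Ker}q$ and $\mathsf{Im}p\leqslant\mathsf{Ker}e$, since these are the only relations binding the diagonal data $p,q$ to the straight kernels and images, and they require commutativity of the squares of $(\ref{dc})$ rather than merely the vanishing of consecutive composites. A secondary point needing care is verifying that no further meet or join of the three generators on either side produces a subobject outside the five shown — equivalently, that the remaining combinations such as $\{f,q\}$ and $\{e,f,q\}$ contribute nothing new or only duplicates — so that the enumeration over $\mathsf{L}_{\mathsf{Ker}}\times\mathsf{L}_{\mathsf{Im}}$ is genuinely exhaustive and the totals $18$ and $14$ are exact; everything else is a routine tabulation.
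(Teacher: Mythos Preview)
Your approach is essentially the same as the paper's: enumerate the pairs $(U,V)$ with $V\leqslant U$ by running through the five elements of $\mathsf{L}_{\mathsf{Im}}$ and listing the admissible $U\in\mathsf{L}_{\mathsf{Ker}}$, obtaining $3+3+5+2+5=18$, exactly as the paper records in its Table~\ref{tabB}. The paper's proof is extremely terse---it simply invokes diagram~(\ref{a}) and tabulates---so your additional work (deriving the inclusions of~(\ref{a}) from the double-complex axioms, isolating the two commuting-square containments $\mathsf{Im}c\leqslant\mathsf{Ker}q$ and $\mathsf{Im}p\leqslant\mathsf{Ker}e$, and explaining the $18\to14$ collapse onto $L$-complexes) fills in justifications the paper leaves implicit rather than constituting a different argument.
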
 
\begin{table} 
\begin{center}
{
\setlength\tabcolsep{1pt}  
\begin{tabular}{|c|c|c|c|}
\hline  
$L\text{-} \mathrm{homology}$ &  $L\text{-} \mathrm{complex}$&$L\text{-} \mathrm{homology}$ &  $L\text{-} \mathrm{complex}$\\     
\hline 
\multirow{2}{9em}{$A_{\mathsf{h}}\!=\!\mathsf{Ker} e/\mathsf{Im} d$} & $\xymatrix@R=1pc@C=1pc{\bullet\ar[r]^{d}&A\ar[r]^{e} & \bullet }$ &\multirow{2}{11em}{$^\vee \!\!A_{\mathsf{d}}\!=\!(\mathsf{Ker} e \vee  \mathsf{Ker}f)/\mathsf{Im}p$}  &$\xymatrix@R=1pc@C=1pc{\bullet\ar[dr]^{p}\\&A\ar[d]^{f}\ar[r]^{e}&\bullet\\&\bullet}$ \\
\hline
\multirow{2}{11em}{$^\vee \!\!A_{\mathsf{h}}\!=\!(\mathsf{Ker} e \vee  \mathsf{Ker}f)/\mathsf{Im}d$}  & $\xymatrix@R=1pc@C=1pc{\bullet\ar[r]^{d}&A\ar[d]^{f}\ar[r]^{e} &\bullet\\&\bullet }$&\multirow{2}{7.6em}{$A_{\mathsf{d}}\!=\!\mathsf{Ker} q/\mathsf{Im}p$}   & $\xymatrix@R=.5pc@C=.5pc{\bullet\ar[dr]^{p}\\&A\ar[dr]^{q}&\\&&\bullet}$   \\   
\hline
\multirow{2}{8em}{$^{\mathsf{d}}\!\!A_{\mathsf{h}}\!=\!\mathsf{Ker} q / \mathsf{Im}d$}  &  $\xymatrix@R=1pc@C=1pc{\bullet\ar[r]^{d}&A\ar[dr]^{q} &\\&&\bullet}$&\multirow{2}{13em}{$ ^\vee \!\!A_{\star}\!=\!(\mathsf{Ker}e \vee  \mathsf{Ker}f)\!/\!(\mathsf{Im} c \vee  \mathsf{Im}d)$} &$\xymatrix@R=1pc@C=1pc{&\bullet\ar[d]^{c}\\\bullet\ar[r]^{d}&A\ar[d]^{f}\ar[r]^{e}&\bullet\\&\bullet}$\\
\hline
\multirow{2}{8em}{$A_{\mathsf{v}}\!=\!\mathsf{Ker} f/\mathsf{Im} c$}  & $\xymatrix@R=1pc@C=1pc{\bullet\ar[d]^{c}\\A\ar[d]^{f}\\\bullet  }$&\multirow{2}{10.43em}{$A_{\star} \!=\!\mathsf{Ker} q/(\mathsf{Im} c \vee  \mathsf{Im}d)$}&$\xymatrix@R=1pc@C=1pc{&\bullet\ar[d]^{c}\\\bullet\ar[r]^{d}&A\ar[dr]^{q}&\\&&\bullet}$   \\
\hline
\multirow{2}{11em}{$^\vee \!\!A_{\mathsf{v}}\!=\!(\mathsf{Ker} e \vee  \mathsf{Ker}f)/\mathsf{Im}c$}  & $\xymatrix@R=1pc@C=1pc{\bullet\ar[d]^{c}\\A\ar[d]^{f}\ar[r]^{e}&\bullet\\\bullet}$&\multirow{2}{12em}{$ ^\star\!\!A_{\wedge }\!=\!(\mathsf{Ker} e \wedge  \mathsf{Ker}f)/(\mathsf{Im} c \wedge  \mathsf{Im}d)$} &$\xymatrix@R=1pc@C=1pc{&\bullet\ar[d]^{c}\\\bullet\ar[r]^{d}&A\ar[d]^{f}\ar[r]^{e}&\bullet\\&\bullet}$  \\
\hline
\multirow{2}{8em}{$^{\mathsf{d}}\!\!A_{\mathsf{v}}\!=\!\mathsf{Ker} q/ \mathsf{Im}c$}   &$\xymatrix@R=1pc@C=1pc{\bullet\ar[d]^{c}\\A\ar[dr]^{q}&\\&\bullet}$&\multirow{2}{10.43em}{$^{\mathsf{h}}\!\!A_{\wedge }\!=\!\mathsf{Ker} e/(\mathsf{Im} c \wedge  {\mathsf{Im}}d)$}&$\xymatrix@R=1pc@C=1pc{&\bullet\ar[d]^{c}\\\bullet\ar[r]^{d}&A\ar[r]^{e}&\bullet}$    \\
\hline
\multirow{2}{11em}{$^\star\!\!A\!=\!(\mathsf{Ker} e \wedge  \mathsf{Ker}f)/\mathsf{Im} p$}  &$\xymatrix@R=1pc@C=1pc{\bullet\ar[dr]^{p}\\&A\ar[d]^{f}\ar[r]^{e}&\bullet\\&\bullet}$&\multirow{2}{11em}{$^{\mathsf{v}}\!\!A_{\wedge }\!=\!\mathsf{Ker} f/(\mathsf{Im} c \wedge  {\mathsf{Im}}d)$}&$\xymatrix@R=1pc@C=1pc{&\bullet\ar[d]^{c}\\\bullet\ar[r]^{d}&A\ar[d]^{f}\\&\bullet}$    \\
\hline
\multirow{2}{8em}{$^{\mathsf{h}}\!\!A_{\mathsf{d}}\!=\! \mathsf{Ker} e/\mathsf{Im} p$} & $\xymatrix@R=1pc@C=1pc{\bullet\ar[dr]^{p}\\&A\ar[r]^{e}&\bullet}$&\multirow{2}{10.43em}{$^{\mathsf{d}}\!\!A_{\wedge }\!=\!\mathsf{Ker} q /(\mathsf{Im} c \wedge  {\mathsf{Im}}d)$}&$\xymatrix@R=1pc@C=1pc{&\bullet\ar[d]^{c}\\\bullet\ar[r]^{d}&A\ar[dr]^{q}&\\&&\bullet}$  \\
\hline
\multirow{2}{9em}{$^{\mathsf{v}}\!\!A_{\mathsf{d}}\!=\!\mathsf{Ker} f/\mathsf{Im} p$} & $\xymatrix@R=1pc@C=.6pc{\bullet\ar[dr]^{p}\\&A\ar[d]^{f}&\\&\bullet}$&\multirow{2}{12em}{$^{\vee }\!\!A_{\wedge }\!=\!(\mathsf{Ker} e\vee \mathsf{Ker}f)/(\mathsf{Im} c \wedge  {\mathsf{Im}}d)$}&$\xymatrix@R=1pc@C=1pc{&\bullet\ar[d]^{c}\\\bullet\ar[r]^{d}&A\ar[d]^{f}\ar[r]^{e}&\bullet\\&\bullet}$  \\
\hline  
\end{tabular}} 
\end{center}
\caption{$L$-complexes and $L$-homologies}
\label{tabA}
\end{table} 

\begin{proof}
From the diagram (\ref{a}) we notice that in order to satisfy condition $V\leqslant U$ of the Definition \ref{def1}, there are eighteen possibilities of $U/V$ as described in the Table \ref{tabB} below:
\begin{table}[H]  
\begin{center}  
\begin{tabular}{|c|c|c|}
\hline 
$V$&$U$& $\#$
\\ 
\hline
$\mathsf{Im}d$ & $\mathsf{Ker}e,  \mathsf{Ker}q, \mathsf{Ker}e\vee  \mathsf{Ker}f$ & $3$ \\
\hline 
$\mathsf{Im}c$ & $\mathsf{Ker}f,  \mathsf{Ker}q, \mathsf{Ker}e\vee  \mathsf{Ker}f$ & $3$
\\
\hline 
$\mathsf{Im}p$ & $\mathsf{Ker}e,  \mathsf{Ker}f,  \mathsf{Ker}q, \mathsf{Ker}e\vee  \mathsf{Ker}f, \mathsf{Ker}e\wedge  \mathsf{Ker}f$ & $5$
\\
\hline 
$\mathsf{Im}c \vee  \mathsf{Im}d$ & $  \mathsf{Ker}q, \mathsf{Ker}e\vee  \mathsf{Ker}f$ & $2$
\\
\hline 
$\mathsf{Im}c \wedge  \mathsf{Im}d$ & $\mathsf{Ker}e,  \mathsf{Ker}f,  \mathsf{Ker}q, \mathsf{Ker}e\vee  \mathsf{Ker}f, \mathsf{Ker}e\wedge  \mathsf{Ker}f$ & $5$
\\  
\hline
\end{tabular}
\end{center} 
\caption{$L$-complexes and $L$-homologies}
\label{tabB}
\end{table}
\end{proof} 
\begin{remark}
The $L$-homologies $A_{\star}$ and $^\star \!\!A$ were introduced in \cite{B12} (with notation $A_{\square}$ and $^\square \!\!A$ respectively). For the rest of the $L$-homologies, we have used the following notation convention: $A_{\mathsf{h}},$ $A_{\mathsf{v}},$ and $A_{\mathsf{d}}$ respectively denote the horizontal, vertical, and diagonal $L$-homologies. Whenever $\mathsf{h},$ $\mathsf{v},$ and $\mathsf{d}$ appear as subscripts or superscripts (along with other symbols as superscripts or subscripts respectively), then they mean the denominator or numerator expressions of the $A_{\mathsf{h}},$ $A_{\mathsf{v}},$ and $A_{\mathsf{d}}$ respectively. The same holds with respect to subscript or superscript $\star,$ whereas $\vee$ and $\wedge$ respectively mean the usual joins and meets. 
\end{remark}
\begin{proposition}
For a given double complex, the $L$-homologies $X_{\mathsf{d}}$ of objects $X$ of the double complex induce a double complex.
\end{proposition}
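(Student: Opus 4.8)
The plan is to exhibit the family $\{X_{\mathsf d}\}$ as the homology of the diagonal differential and then to descend the two original differentials onto it. Index the objects so that $X_{i,j}$ sits in row $i$ and column $j$, and write $\partial^{\mathsf h}\colon X_{i,j}\to X_{i,j+1}$ and $\partial^{\mathsf v}\colon X_{i,j}\to X_{i+1,j}$ for the horizontal and vertical morphisms, so that $\partial^{\mathsf h}\partial^{\mathsf h}=0$, $\partial^{\mathsf v}\partial^{\mathsf v}=0$ and each square commutes, $\partial^{\mathsf v}\partial^{\mathsf h}=\partial^{\mathsf h}\partial^{\mathsf v}$. Put $\Delta_{i,j}=\partial^{\mathsf v}\partial^{\mathsf h}=\partial^{\mathsf h}\partial^{\mathsf v}\colon X_{i,j}\to X_{i+1,j+1}$; this is the diagonal morphism, and the maps $p$ and $q$ of $(\ref{dc})$ are instances of it. A direct computation gives $\Delta_{i+1,j+1}\Delta_{i,j}=\partial^{\mathsf v}\partial^{\mathsf h}\partial^{\mathsf v}\partial^{\mathsf h}=\partial^{\mathsf v}\partial^{\mathsf v}\partial^{\mathsf h}\partial^{\mathsf h}=0$, so along every diagonal the $\Delta$ assemble into a complex whose homology at $X_{i,j}$ is precisely $X_{i,j,\mathsf d}=\mathsf{Ker}\,\Delta_{i,j}/\mathsf{Im}\,\Delta_{i-1,j-1}$. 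The candidate double complex keeps the same grid of indices, with $X_{i,j}$ replaced by $X_{i,j,\mathsf d}$.

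Next I would manufacture the two differentials by feeding each $\partial^{\mathsf h}$ and each $\partial^{\mathsf v}$ into Proposition \ref{pro1}(a). For a horizontal edge take $U=\mathsf{Ker}\,\Delta_{i,j}$, $P=\mathsf{Im}\,\Delta_{i-1,j-1}$ in $X_{i,j}$ and the corresponding $V=\mathsf{Ker}\,\Delta_{i,j+1}$, $Q=\mathsf{Im}\,\Delta_{i-1,j}$ in $X_{i,j+1}$. The two hypotheses $\partial^{\mathsf h}(P)\leqslant Q$ and $U\leqslant(\partial^{\mathsf h})\inv(V)$ reduce to the identities $\partial^{\mathsf h}\Delta_{i-1,j-1}=\partial^{\mathsf h}\partial^{\mathsf v}\partial^{\mathsf h}=\partial^{\mathsf v}\partial^{\mathsf h}\partial^{\mathsf h}=0$ and $\Delta_{i,j+1}\partial^{\mathsf h}=\partial^{\mathsf v}\partial^{\mathsf h}\partial^{\mathsf h}=0$, each a consequence of $\partial^{\mathsf h}\partial^{\mathsf h}=0$ and the commutativity of squares; thus $\partial^{\mathsf h}$ carries $P$ to $0\leqslant Q$ and carries all of $X_{i,j}$, in particular $U$, into $V$. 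Proposition \ref{pro1}(a) then yields a canonical morphism $\delta^{\mathsf h}\colon X_{i,j,\mathsf d}\to X_{i,j+1,\mathsf d}$, and the same argument with the roles of $\partial^{\mathsf h}$ and $\partial^{\mathsf v}$ interchanged produces $\delta^{\mathsf v}\colon X_{i,j,\mathsf d}\to X_{i+1,j,\mathsf d}$.

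It then remains to verify the double complex axioms for $\delta^{\mathsf h}$ and $\delta^{\mathsf v}$. For the vanishing of consecutive composites I would use that the assignment of Proposition \ref{pro1}(a) is functorial: by the uniqueness of the induced morphisms one has $(\psi\varphi)''=\psi''\varphi''$, and the zero morphism induces the zero morphism, since $\varphi=0$ forces $\iota_V\varphi'=0$, hence $\varphi'=0$ as $\iota_V$ is monic, and then $\varphi''=0$. Applying this to $\partial^{\mathsf h}\partial^{\mathsf h}=0$ and $\partial^{\mathsf v}\partial^{\mathsf v}=0$ gives $\delta^{\mathsf h}\delta^{\mathsf h}=0$ and $\delta^{\mathsf v}\delta^{\mathsf v}=0$. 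For the commutativity of the squares I would invoke Proposition \ref{pro1}(b) verbatim: its hypotheses are exactly the four pairs of inequalities just checked for the four edges of a square together with $\partial^{\mathsf v}\partial^{\mathsf h}=\partial^{\mathsf h}\partial^{\mathsf v}$, and its conclusion is $\delta^{\mathsf v}\delta^{\mathsf h}=\delta^{\mathsf h}\delta^{\mathsf v}$. Hence $(X_{i,j,\mathsf d},\delta^{\mathsf h},\delta^{\mathsf v})$ satisfies all the defining identities of a double complex.

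The routine portion is the uniform checking of the inequality hypotheses of Proposition \ref{pro1}, which in every instance collapses to $\partial^{\mathsf h}\partial^{\mathsf h}=0$, $\partial^{\mathsf v}\partial^{\mathsf v}=0$ and the commuting-square identity. I expect the main point requiring care to be the index bookkeeping: one must be sure that $\partial^{\mathsf h}$ (respectively $\partial^{\mathsf v}$) transports the in-diagonal $\mathsf{Im}\,\Delta_{i-1,j-1}$ of a source object into the in-diagonal of the correct target object and its kernel $\mathsf{Ker}\,\Delta_{i,j}$ into the correct target kernel, so that $\delta^{\mathsf h}$ and $\delta^{\mathsf v}$ really land in the intended homologies. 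A secondary subtlety is that all three conclusions above, namely well-definedness on homology, nilpotency of the differentials, and commutativity of the induced squares, rest on the uniqueness clauses of Proposition \ref{pro1}; once these are in place the commuting-square convention of the original guarantees that the induced squares commute rather than anticommute, so the inherited structure is a genuine double complex of the same type.
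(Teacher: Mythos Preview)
Your argument is correct and structurally parallel to the paper's: both build $\delta^{\mathsf h}$ and $\delta^{\mathsf v}$ by checking the hypotheses of Proposition~\ref{pro1}(a) and then obtain the commutativity of the induced squares from Proposition~\ref{pro1}(b). The one substantive difference lies in how the complex condition $\delta^{\mathsf h}\delta^{\mathsf h}=0$ is established. The paper proves it by an explicit subobject computation, showing directly that
\[
e(\mathsf{Ker}\,q)\vee\mathsf{Im}\,r\;\leqslant\;(\mathsf{Im}\,v\vee\mathsf{Ker}\,b)\wedge\mathsf{Ker}\,s,
\]
while you derive it from the functoriality of the $(-)''$ construction applied to $\partial^{\mathsf h}\partial^{\mathsf h}=0$, together with $0''=0$. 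Your route is cleaner and makes the reliance on the three double-complex identities completely transparent; the paper's lattice calculation, by contrast, stays within the subobject-arithmetic idiom used elsewhere in the article and yields slightly sharper intermediate information (for instance, the exact shape of the image inside the target). Either verification suffices here.
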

\begin{proof}
Consider the double complex  (\ref{dc}). First, we show that $A_{\mathsf{d}}\to B_{\mathsf{d}}\to E_{\mathsf{d}}$ is a horizontal complex. The proof of the vertical complex is similar. The canonical morphism $A_{\mathsf{d}}\to B_{\mathsf{d}}$ (and similarly $B_{\mathsf{d}}\to E_{\mathsf{d}}$) exists because $e(\mathsf{Ker}q)= (\mathsf{Im}e\wedge  \mathsf{Ker}g)\leqslant \mathsf{Ker}s$ and $e(\mathsf{Im}p)\leqslant e(\mathsf{Im}c)=\mathsf{Im}r.$    To have the complex, we notice that
$$
e(\mathsf{Ker}q)\vee \mathsf{Im}r=(\mathsf{Im}e\wedge  \mathsf{Ker}g)\vee \mathsf{Im}r
= \mathsf{Im}e\wedge  \mathsf{Ker}g
\leqslant \mathsf{Ker}b\wedge  \mathsf{Ker}s 
\leqslant (\mathsf{Im}v \vee  \mathsf{Ker}b)\wedge  \mathsf{Ker}s.
$$ 
In order to obtain a double complex, what remains is to show the commutativity of a square having vertices as $U_{\mathsf{d}},$ $V_{\mathsf{d}},$ $W_{\mathsf{d}},$ and $X_{\mathsf{d}}.$ By the above argument once we have the canonical morphisms  $U_{\mathsf{d}}\to V_{\mathsf{d}}\to W_{\mathsf{d}}$ and $U_{\mathsf{d}}\to X_{\mathsf{d}}\to W_{\mathsf{d}},$  the commutativity follows from the  Proposition \ref{pro1}(a). 
\end{proof} 
In next section, we are going to describe two poset structures of $L$-homologies and the category of $L$-homologies.
\section{Structures of $L$-homologies}
\begin{definition}\label{binary1}
Let $H_A=X/Y$ and $H'_A=U/V$ be two elements of $\mathsf{LH}_A.$  We define a  relation $\hookrightarrow$ on  $\mathsf{LH}_A$  as $H_A\hookrightarrow H_A'$ if  $X\leqslant U$ and $V\leqslant Y.$ We define the join of $H_A$ and $H'_A$ as $H_A\curlyvee H_A'=  (X\vee U)/(Y\wedge V).$   
\end{definition}       
\begin{theorem} \label{slt}   
For each object $A$ of a double complex   $\mathfrak{D},$ the pair $(\mathsf{LH}_A, \hookrightarrow)$ forms a join-semilattice with $A_{\mathsf{d}}$ as the top element.
\end{theorem}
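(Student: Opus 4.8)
The plan is to verify the three defining conditions of a join-semilattice with top element: that $\hookrightarrow$ is a partial order on $\mathsf{LH}_A$, that the operation $\curlyvee$ of Definition \ref{binary1} genuinely produces a least upper bound lying inside $\mathsf{LH}_A$, and that $A_{\mathsf{d}}=\mathsf{Ker}q/\mathsf{Im}p$ dominates every element of $\mathsf{LH}_A$ under $\hookrightarrow$.

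First I would check that $\hookrightarrow$ is a partial order. Given $H_A=X/Y$ and $H'_A=U/V$, the relation $H_A\hookrightarrow H'_A$ asks for $X\leqslant U$ and $V\leqslant Y$. Reflexivity, antisymmetry, and transitivity all reduce immediately to the corresponding properties of the lattice order $\leqslant$ on $\mathsf{Sub}(A)$, using antisymmetry of $\leqslant$ (which holds because subobjects are taken up to isomorphism) for the antisymmetry of $\hookrightarrow$. This step is routine.

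Next I would show that $H_A\curlyvee H'_A=(X\vee U)/(Y\wedge V)$ is actually an element of $\mathsf{LH}_A$ and is the join. Membership requires three things: that $X\vee U\in\mathsf{L}_{\mathsf{Ker}}$, that $Y\wedge V\in\mathsf{L}_{\mathsf{Im}}$, and that $Y\wedge V\leqslant X\vee U$ so that it forms a legitimate $L$-homology per Definition \ref{def1}. The first two are where the lattice structure of $\mathsf{L}_{\mathsf{Ker}}$ and $\mathsf{L}_{\mathsf{Im}}$ is used: since $X,U\in\mathsf{L}_{\mathsf{Ker}}$ and this set is closed under the join $\vee$, and dually $Y,V\in\mathsf{L}_{\mathsf{Im}}$ is closed under $\wedge$, the numerator and denominator stay within the prescribed lattices. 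For the inequality $Y\wedge V\leqslant X\vee U$, I would argue $Y\wedge V\leqslant Y\leqslant X\leqslant X\vee U$ (using $Y\leqslant X$ from $H_A\in\mathsf{LH}_A$). Once membership is established, the universal property is direct from the definition of $\hookrightarrow$: we have $X\leqslant X\vee U$ and $Y\wedge V\leqslant Y$, so $H_A\hookrightarrow H_A\curlyvee H'_A$, and symmetrically for $H'_A$; and for any upper bound $W/Z$ of both, $X\leqslant W$ and $U\leqslant W$ give $X\vee U\leqslant W$, while $Z\leqslant Y$ and $Z\leqslant V$ give $Z\leqslant Y\wedge V$, so $H_A\curlyvee H'_A\hookrightarrow W/Z$.

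Finally, for the top element, I would verify that every $L$-homology $U/V$ in $\mathsf{LH}_A$ satisfies $U/V\hookrightarrow A_{\mathsf{d}}=\mathsf{Ker}q/\mathsf{Im}p$, i.e.\ $U\leqslant\mathsf{Ker}q$ and $\mathsf{Im}p\leqslant V$. This is exactly the content of the diagram (\ref{a}): inspecting that diagram of canonical monomorphisms shows $\mathsf{Ker}q$ is the top of the lattice $\mathsf{L}_{\mathsf{Ker}}$ (every generator $\mathsf{Ker}e$, $\mathsf{Ker}f$, and their join and meet sit below it) and $\mathsf{Im}p$ is the bottom of $\mathsf{L}_{\mathsf{Im}}$ (every generator and their joins and meets sit above it). The main obstacle, such as it is, lies in this last step together with the membership claims: one must confirm from the double-complex relations $p=ca$, $q=ge$ that the six nodes of (\ref{a}) really are ordered as drawn, so that $\mathsf{Ker}q$ and $\mathsf{Im}p$ are the extremal elements of their respective lattices; once the Hasse structure of (\ref{a}) is granted, the proof is a short formal argument.
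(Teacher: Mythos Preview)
Your proposal is correct and follows essentially the same approach as the paper's proof: verify that $\hookrightarrow$ is a partial order, check that the join lands in $\mathsf{LH}_A$, and use the extremal positions of $\mathsf{Ker}q$ and $\mathsf{Im}p$ in diagram~(\ref{a}) to identify $A_{\mathsf{d}}$ as the top element. The only cosmetic difference is that you justify $Y\wedge V\leqslant X\vee U$ via the chain $Y\wedge V\leqslant Y\leqslant X\leqslant X\vee U$ using $Y\leqslant X$, whereas the paper appeals directly to diagram~(\ref{a}) to note that every element of $\mathsf{L}_{\mathsf{Im}}$ lies below every element of $\mathsf{L}_{\mathsf{Ker}}$; both arguments are valid, and yours is in fact slightly more self-contained.
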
                  
\begin{proof}          
The partial order relation $\leqslant,$ and the definition of the relation $\hookrightarrow$ shows that $(\mathsf{LH}_A, \hookrightarrow)$ is a poset.  From the inclusion diagram (\ref{a}) we notice that for $X, U\in  \mathsf{L}_{\mathsf{Ker}}$ and $Y, V\in \mathsf{L}_{\mathsf{Im}}$ we have $Y\wedge V\leqslant X\vee U.$ Therefore, the join of two elements of $\mathsf{LH}_A$ exists. We observe that  ${U\leqslant \mathsf{Ker}q}$ and $ \mathsf{Im}p\leqslant V$ for any double complex homology $U/V\in  \mathsf{LH}_A,$  i.e. $A_{\mathsf{d}}$ is the top element of $(\mathsf{LH}_A, \hookrightarrow)$ as asserted.
\end{proof} 
\begin{corollary}\label{pocor}           
$\mathrm{(a)}$ If any one of the $L$-homologies $^\mathsf{h}\!\!A_{\wedge },$ $^\mathsf{h}\!\!A_{\mathsf{d}},$ $^\vee \!\!A_{\mathsf{d}},$ $^\vee \!\!A_{\mathsf{h}},$
$^{\mathsf{d}}\!\!A_{\mathsf{h}},$
$^{\mathsf{d}}\!\!A_{\wedge },$  $A_{\mathsf{d}}$ is trivial then so is $A_{\mathsf{h}}.$

$\mathrm{(b)}$ If any one of the $L$-homologies $^\mathsf{v}\!\!A_{\wedge },$ $^\mathsf{v}\!\!A_{\mathsf{d}},$ $^\vee \!\!A_{\mathsf{d}},$ $^\vee \!\!A_{\mathsf{v}},$ 
$^{\mathsf{d}}\!\!A_{\mathsf{v}},$
$^{\mathsf{d}}\!\!A_{\wedge },$  $A_{\mathsf{d}}$  is trivial then so is $A_{\mathsf{v}}.$

$\mathrm{(c)}$ The triviality of  $A_{\mathsf{d}}$  implies trivialities of all $L$-homologies of $A$.
\end{corollary}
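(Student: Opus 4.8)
The plan is to isolate a single monotonicity principle and then reduce all three parts to it. The principle is: whenever $H_A \hookrightarrow H'_A$ in $(\mathsf{LH}_A, \hookrightarrow)$ and $H'_A$ is trivial, then $H_A$ is trivial. To establish it, write $H_A = X/Y$ and $H'_A = U/V$; by Definition \ref{def1} we have $Y \leqslant X$ and $V \leqslant U$, while $H_A \hookrightarrow H'_A$ supplies $X \leqslant U$ and $V \leqslant Y$ by Definition \ref{binary1}. Triviality of $H'_A$ means $U/V$ is a zero object, i.e. $U = V$ as subobjects of $A$. Chaining these relations gives $X \leqslant U = V \leqslant Y \leqslant X$, whence $X = Y$ and $H_A$ is trivial. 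This lemma says precisely that the set of trivial $L$-homologies is a down-set of the poset $(\mathsf{LH}_A, \hookrightarrow)$.

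With this principle in hand, parts (a) and (b) reduce to a finite verification. For (a) I would check that $A_{\mathsf{h}} = \mathsf{Ker}e/\mathsf{Im}d$ satisfies $A_{\mathsf{h}} \hookrightarrow H$ for each of the seven homologies $H$ in the list; the principle then forces triviality of $A_{\mathsf{h}}$ from triviality of any such $H$. Each verification only compares numerators and denominators against the inclusion diagram (\ref{a}): the numerator of $A_{\mathsf{h}}$ is $\mathsf{Ker}e$, which lies below $\mathsf{Ker}e \vee \mathsf{Ker}f$ and below $\mathsf{Ker}q$, while its denominator $\mathsf{Im}d$ lies above both $\mathsf{Im}p$ and $\mathsf{Im}c \wedge \mathsf{Im}d$; these two facts cover every entry, since each listed $H$ has numerator among $\mathsf{Ker}e,\ \mathsf{Ker}e \vee \mathsf{Ker}f,\ \mathsf{Ker}q$ and denominator among $\mathsf{Im}d,\ \mathsf{Im}p,\ \mathsf{Im}c \wedge \mathsf{Im}d$. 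Part (b) is the mirror-image computation for $A_{\mathsf{v}} = \mathsf{Ker}f/\mathsf{Im}c$, using $\mathsf{Ker}f \leqslant \mathsf{Ker}e \vee \mathsf{Ker}f \leqslant \mathsf{Ker}q$ and $\mathsf{Im}p, \mathsf{Im}c \wedge \mathsf{Im}d \leqslant \mathsf{Im}c$.

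Part (c) is then immediate: by Theorem \ref{slt}, $A_{\mathsf{d}}$ is the top element of $(\mathsf{LH}_A, \hookrightarrow)$, so $H_A \hookrightarrow A_{\mathsf{d}}$ for every $H_A \in \mathsf{LH}_A$, and triviality of $A_{\mathsf{d}}$ propagates to all of them by the principle. I do not anticipate a genuine obstacle, since the only real content is the monotonicity lemma and the rest is bookkeeping against diagram (\ref{a}). The one point demanding care is reading off the correct generating inclusions, in particular that $\mathsf{Im}p$ is the bottom of $\mathsf{L}_{\mathsf{Im}}$ (so $\mathsf{Im}p \leqslant \mathsf{Im}c$ and $\mathsf{Im}p \leqslant \mathsf{Im}d$) and that $\mathsf{Ker}q$ is the top of $\mathsf{L}_{\mathsf{Ker}}$ (so $\mathsf{Ker}e \leqslant \mathsf{Ker}q$ and $\mathsf{Ker}f \leqslant \mathsf{Ker}q$); these are exactly the inclusions that place $A_{\mathsf{d}}$ at the top of the semilattice.
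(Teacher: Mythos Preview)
Your proposal is correct and is exactly the argument the paper leaves implicit: the corollary is stated without proof immediately after Theorem~\ref{slt}, and your monotonicity principle (trivial $L$-homologies form a down-set of $(\mathsf{LH}_A,\hookrightarrow)$) together with the explicit order checks against diagram~(\ref{a}) is precisely what makes it a corollary. There is nothing to add.
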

\begin{remark}  
The meet $X/Y\curlywedge U/V= (X\wedge U)/(Y\vee V)$ of two elements $X/Y$ and $U/V$ of   $\mathsf{LH}_\mathfrak{D}$ may not exist. For example, the meet of $A_{\mathsf{h}}$ and $A_{\mathsf{v}}$ does not exist because $ \mathsf{Im}c\vee \mathsf{Im}d \nleq   \mathsf{Ker}e\wedge \mathsf{Ker}f .$
\end{remark}  
\begin{definition}\label{binary2}
Let $f\colon A\to B$ be a morphism of $\mathfrak{D}$ and $H_A,$  $H_B$ be elements of $\mathsf{LH}_A$ and $\mathsf{LH}_B$ respectively.  We define a relation $\preceq$  on  $\mathsf{LH}_{\mathfrak{D}}$ as    
$H_A\preceq H_B$ if there exists a canonical  morphism $\phi\colon H_A \to H_B$ in the sense of Proposition \ref{pro1}.     
\end{definition}
Given an object $A$ of $\mathfrak{D},$ we observe that the relation $\hookrightarrow $ in Definition \ref{binary1} and the relation $\preceq$   in Definition \ref{binary2} are inter-related as follows. 
\begin{proposition} Given $L$-homologies $H_A=X/Y$ and $H_A'=U/V$ of an  object $A$  
we have $H_A\hookrightarrow H_A'\Leftrightarrow H_A\preceq H_A'$  if and only if $Y=V.$     
\end{proposition}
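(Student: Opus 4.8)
The plan is to translate both relations into explicit inequalities between elements of $\mathsf{L}_{\mathsf{Ker}}$ and $\mathsf{L}_{\mathsf{Im}}$ and then to compare the resulting conditions. Unwinding Definition \ref{binary1}, the relation $H_A\hookrightarrow H_A'$ asserts exactly that $X\leqslant U$ and $V\leqslant Y$. For $\preceq$ I would first observe that, since $H_A$ and $H_A'$ are $L$-homologies of one and the same object $A$, a canonical morphism $H_A\to H_A'$ in the sense of Proposition \ref{pro1} can only be induced by the identity endomorphism $\mathrm{id}_A\colon A\to A$. Feeding $e=\mathrm{id}_A$ into Proposition \ref{pro1}(a), the direct and inverse image operators become trivial, so the numerator hypothesis $X\leqslant \mathrm{id}_A\inv(U)$ reduces to $X\leqslant U$ and the denominator hypothesis $\mathrm{id}_A(Y)\leqslant V$ reduces to $Y\leqslant V$; since these two inequalities are also sufficient for the canonical morphism to exist, we obtain that $H_A\preceq H_A'$ holds precisely when $X\leqslant U$ and $Y\leqslant V$.

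Once both relations are in this form the comparison is immediate: each of $\hookrightarrow$ and $\preceq$ carries the same numerator clause $X\leqslant U$, and the two differ only in the denominator clause, which is $V\leqslant Y$ for $\hookrightarrow$ and the reverse inequality $Y\leqslant V$ for $\preceq$. For the direction assuming $Y=V$, I would simply note that both denominator clauses then hold automatically, so each relation collapses to the single condition $X\leqslant U$; hence $H_A\hookrightarrow H_A'$ and $H_A\preceq H_A'$ are equivalent.

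For the converse I would run the argument through antisymmetry of the subobject order on $A$. If the two relations are equivalent for the pair in question, then in the presence of the common clause $X\leqslant U$ the clauses $V\leqslant Y$ and $Y\leqslant V$ must hold simultaneously, whence antisymmetry of $\leqslant$ on $\mathsf{Sub}(A)$ forces $Y=V$. I expect the delicate point of the whole proposition to lie exactly here: in the converse one must rule out the degenerate possibility that $Y$ and $V$ are incomparable in $\mathsf{L}_{\mathsf{Im}}$ (so that both denominator clauses fail and the two relations agree vacuously), which is why the equivalence should be read as realised non-vacuously, i.e. at the level of the defining conditions rather than of accidental truth values; after that reduction the antisymmetry step is routine. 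The second point requiring care is the identification of $\preceq$ on a single object with the degenerate instance of Proposition \ref{pro1}(a) for $\mathrm{id}_A$, namely confirming that within $\mathfrak{D}$ no morphism other than the identity can supply a canonical morphism between two $L$-homologies of $A$, so that the reduction of $\preceq$ to the pair of inequalities $X\leqslant U$, $Y\leqslant V$ is genuinely forced.
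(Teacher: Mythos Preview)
The paper states this proposition without proof, so there is nothing to compare against directly. Your reduction of the two relations is exactly right: on $\mathsf{LH}_A$ the relation $\preceq$ is carried by the identity of $A$ (the only endomorphism of $A$ available in a double complex, and the one implicitly required for reflexivity in Theorem~\ref{poset}), so Proposition~\ref{pro1}(a) collapses it to $X\leqslant U$ together with $Y\leqslant V$, while $\hookrightarrow$ is $X\leqslant U$ together with $V\leqslant Y$ by Definition~\ref{binary1}. The direction $Y=V\Rightarrow(\hookrightarrow\ \Leftrightarrow\ \preceq)$ then follows immediately, as you say.

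You are also right to flag the converse: read literally it fails, since whenever $X\nleqslant U$ both relations are false and the biconditional holds vacuously, irrespective of $Y$ and $V$. A concrete witness in the notation of Table~\ref{tabA} is $H_A=A_{\mathsf h}=\mathsf{Ker}e/\mathsf{Im}d$ and $H_A'=A_{\mathsf v}=\mathsf{Ker}f/\mathsf{Im}c$: in general $\mathsf{Ker}e\nleqslant\mathsf{Ker}f$, so neither relation holds, yet $\mathsf{Im}d\neq\mathsf{Im}c$. Your remedy---reading the equivalence at the level of the defining inequality clauses rather than of accidental truth values---is the natural fix and is presumably what the author intends; this is an imprecision in the proposition as written, not a defect in your argument. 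Your secondary worry about non-identity endomorphisms is unnecessary: the morphisms of a double complex strictly increase at least one bidegree, so no non-identity morphism $A\to A$ exists in $\mathfrak{D}$.
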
   
\begin{theorem}\label{poset}
For each object $A$ of the double complex (\ref{dc}),  the pair $(\mathsf{LH}_A, \preceq)$  forms a poset with  bottom and top elements respectively as $^\star\!\!A$ and $A_{\star}.$
\end{theorem}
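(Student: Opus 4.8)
The plan is to make the relation $\preceq$ explicit on a single object and then read off the three order axioms and the two bounds componentwise. First I would describe $\preceq$ restricted to $\mathsf{LH}_A$. Given $L$-homologies $X/Y$ and $U/V$ of the \emph{same} object $A$, a canonical morphism between them in the sense of Proposition \ref{pro1} is produced by taking the morphism $e=\mathrm{id}_A\colon A\to A$. Feeding $e=\mathrm{id}_A$ into Proposition \ref{pro1}(a), the hypotheses $e(Y)\leqslant V$ and $X\leqslant e\inv(U)$ read simply $Y\leqslant V$ and $X\leqslant U$, and the proposition then supplies the canonical morphism $X/Y\to U/V$. Hence, on $\mathsf{LH}_A$,
\[
X/Y\preceq U/V\iff X\leqslant U\ \text{and}\ Y\leqslant V,
\]
which is consistent with the preceding Proposition (the relations $\hookrightarrow$ and $\preceq$ agree exactly when $Y=V$).

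Granting this description, the partial-order axioms are inherited from the lattice order $\leqslant$ on $\mathsf{Sub}(A)$. Reflexivity is $X\leqslant X$, $Y\leqslant Y$ (the identity-induced morphism). Transitivity follows because $X\leqslant U\leqslant X'$ and $Y\leqslant V\leqslant Y'$ give $X\leqslant X'$ and $Y\leqslant Y'$; equivalently, the composite of two identity-induced canonical morphisms is again canonical. For antisymmetry, $X/Y\preceq U/V$ together with $U/V\preceq X/Y$ forces $X=U$ and $Y=V$ by antisymmetry of $\leqslant$, and since an $L$-homology is determined by its numerator--denominator pair $(U,V)$, the two triples coincide.

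For the bounds I would read the extremal elements off the inclusion diagram (\ref{a}). There $\mathsf{Ker}q$ is the greatest and $\mathsf{Ker}e\wedge\mathsf{Ker}f$ the least element of $\mathsf{L}_{\mathsf{Ker}}$, while $\mathsf{Im}c\vee\mathsf{Im}d$ is the greatest and $\mathsf{Im}p$ the least element of $\mathsf{L}_{\mathsf{Im}}$ ($\mathsf{Im}p\leqslant\mathsf{Im}c$ since $p=ca$, and $\mathsf{Im}p\leqslant\mathsf{Im}d$ by the double-complex commutativity relations, so $\mathsf{Im}p\leqslant\mathsf{Im}c\wedge\mathsf{Im}d$, as recorded in (\ref{a})). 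Consequently every $L$-homology $X/Y\in\mathsf{LH}_A$ satisfies
\[
\mathsf{Ker}e\wedge\mathsf{Ker}f\leqslant X\leqslant\mathsf{Ker}q
\quad\text{and}\quad
\mathsf{Im}p\leqslant Y\leqslant\mathsf{Im}c\vee\mathsf{Im}d,
\]
which by the displayed description of $\preceq$ is precisely ${}^\star\!\!A\preceq X/Y\preceq A_{\star}$, where ${}^\star\!\!A=(\mathsf{Ker}e\wedge\mathsf{Ker}f)/\mathsf{Im}p$ and $A_{\star}=\mathsf{Ker}q/(\mathsf{Im}c\vee\mathsf{Im}d)$. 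As both ${}^\star\!\!A$ and $A_{\star}$ appear in Table \ref{tabA}, they belong to $\mathsf{LH}_A$ and are therefore its bottom and top.

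The step I expect to be the main obstacle is the very first one: arguing that a canonical morphism between two $L$-homologies of one object may be taken to come from $\mathrm{id}_A$, so that $\preceq$ collapses to the componentwise order $X\leqslant U,\ Y\leqslant V$ rather than to something finer induced by the ambient maps of $\mathfrak{D}$. Closely tied to this is a careful reading of (\ref{a}) to confirm the extremal generators, and in particular that the least element of $\mathsf{L}_{\mathsf{Im}}$ is $\mathsf{Im}p$ and \emph{not} $\mathsf{Im}c\wedge\mathsf{Im}d$; this is exactly what makes ${}^\star\!\!A$, rather than ${}^\star\!\!A_{\wedge}$, the bottom element.
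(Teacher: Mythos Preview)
Your argument is correct and matches the paper's own proof: both identify the restriction of $\preceq$ to $\mathsf{LH}_A$ with the condition coming from Proposition~\ref{pro1}(a) applied to $\mathrm{id}_A$, and both read off the bounds from the inclusion diagram~(\ref{a}) via $\mathsf{Ker}e\wedge\mathsf{Ker}f\leqslant U\leqslant\mathsf{Ker}q$ and $\mathsf{Im}p\leqslant V\leqslant\mathsf{Im}c\vee\mathsf{Im}d$. You are simply more explicit where the paper is terse (it compresses the poset axioms into the single sentence ``follows from Proposition~\ref{pro1}(a) and Definition~\ref{binary2}''), and your closing worry about $\mathrm{id}_A$ is a legitimate reading of Definition~\ref{binary2} but is exactly the interpretation the paper uses implicitly.
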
  
\begin{proof}
The fact that the relation $\preceq$  is partially ordered, follows from the Proposition \ref{pro1}(a) and Definition \ref{binary2}. To find the top and bottom elements, let $U/V\in\mathsf{LH}_A.$ From the inclusion diagram (\ref{a}), we obtain the following inclusions: 
$$
\mathsf{Ker}e \wedge  \mathsf{Ker}f\leqslant U\leqslant \mathsf{Ker}q,\quad 
\mathsf{Im}p\leqslant V\leqslant \mathsf{Im}c\vee \mathsf{Im}d,
$$
and by applying Proposition \ref{pro1}(a), we have $$(\mathsf{Ker}e \wedge  \mathsf{Ker}f)/\mathsf{Im}p \preceq  U/V \preceq  \mathsf{Ker}q/(\mathsf{Im}c\vee \mathsf{Im}d).$$ In other words,  $^\star\!\!A$ and $A_{\star}$  are  respectively the bottom and the top elements of the poset  $(\mathsf{LH}_A, \preceq)$  as asserted.
\end{proof}   
\begin{proposition}
Given a morphism $e\colon A\to B$ of $(\ref{dc})$,   the  trivialities of  $A_{\mathsf{h}}$ and $B_{\mathsf{h}}$ imply an isomorphism between the top element $A_{\star}$ of $(\mathsf{LH}_A, \preceq)$  and the bottom element $^\star\!\!B$ of $(\mathsf{LH}_B,\preceq).$ 
\label{b12}
\end{proposition}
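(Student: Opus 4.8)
The plan is to realise the claimed isomorphism as the canonical morphism $e''\colon A_{\star}\to{}^\star\!\!B$ induced by $e\colon A\to B$ through Proposition \ref{pro1}(a), and then to check separately that $e''$ is monic and epic, whence an isomorphism in $\mathds{A}$.

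First I would unwind the two objects in terms of the maps surrounding $e$. Reading off Table \ref{tabA}, for the object $A$ one has $A_{\star}=\mathsf{Ker}q/(\mathsf{Im}c\vee\mathsf{Im}d)$ with $q=ge$, while for $B$ the diagonal incoming map is $r=ec$ and the two outgoing maps are $b$ (horizontal) and $g$ (vertical), so ${}^\star\!\!B=(\mathsf{Ker}b\wedge\mathsf{Ker}g)/\mathsf{Im}r$. Using $\mathsf{Im}d\leqslant\mathsf{Ker}e$ and $\mathsf{Im}e\leqslant\mathsf{Ker}b$ from the complex structure, the two hypotheses translate into the exactness equalities $\mathsf{Ker}e=\mathsf{Im}d$ and $\mathsf{Ker}b=\mathsf{Im}e$; these are the only two inputs the argument will use.

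Next I would produce $e''$. Since $\mathsf{Ker}q=e\inv(\mathsf{Ker}g)$, the identity $ee\inv(T)=T\wedge\mathsf{Im}e$ gives $e(\mathsf{Ker}q)=\mathsf{Ker}g\wedge\mathsf{Im}e=\mathsf{Ker}b\wedge\mathsf{Ker}g$ after substituting $\mathsf{Im}e=\mathsf{Ker}b$; and $e(\mathsf{Im}c\vee\mathsf{Im}d)=\mathsf{Im}(ec)\vee e(\mathsf{Ker}e)=\mathsf{Im}r$, because $e(\mathsf{Ker}e)=\bot$ and $\mathsf{Im}d=\mathsf{Ker}e$. Thus $e$ sends the numerator of $A_{\star}$ onto that of ${}^\star\!\!B$ and the denominator onto that of ${}^\star\!\!B$; in particular $e(\mathsf{Im}c\vee\mathsf{Im}d)\leqslant\mathsf{Im}r$ and $\mathsf{Ker}q\leqslant e\inv(\mathsf{Ker}b\wedge\mathsf{Ker}g)$, so Proposition \ref{pro1}(a) applies and yields the canonical $e''\colon A_{\star}\to{}^\star\!\!B$.

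Finally I would check bijectivity using the image and kernel of a canonical morphism, computed as in the proof of Proposition \ref{pro1}(c). The image of $e''$ is $(e(\mathsf{Ker}q)\vee\mathsf{Im}r)/\mathsf{Im}r=(\mathsf{Ker}b\wedge\mathsf{Ker}g)/\mathsf{Im}r={}^\star\!\!B$, so $e''$ is epic. For monicity, the kernel of $e''$ is $\big(\mathsf{Ker}q\wedge e\inv(\mathsf{Im}r)\big)/(\mathsf{Im}c\vee\mathsf{Im}d)$; here $e\inv(\mathsf{Im}r)=e\inv e(\mathsf{Im}c)=\mathsf{Im}c\vee\mathsf{Ker}e=\mathsf{Im}c\vee\mathsf{Im}d$ by $f\inv f(S)=S\vee\mathsf{Ker}f$ together with $\mathsf{Ker}e=\mathsf{Im}d$, and since $\mathsf{Im}c\vee\mathsf{Im}d\leqslant\mathsf{Ker}q$ (diagram (\ref{a})) the meet collapses to $\mathsf{Im}c\vee\mathsf{Im}d$, making the kernel zero. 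Being monic and epic in the abelian category $\mathds{A}$, $e''$ is the desired isomorphism. I expect the monicity step to be the crux: it is exactly where the triviality of $A_{\mathsf{h}}$ (i.e. $\mathsf{Ker}e=\mathsf{Im}d$) is needed to keep $e\inv e(\mathsf{Im}c)$ from acquiring extra $\mathsf{Ker}e$ and thereby enlarging the kernel, while the triviality of $B_{\mathsf{h}}$ drives the epicity; dropping either hypothesis breaks the bijection.
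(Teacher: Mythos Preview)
Your proof is correct and follows essentially the same route as the paper's: both arguments produce the canonical morphism via Proposition~\ref{pro1}(a) and then establish the isomorphism by computing the two key identities $e(\mathsf{Ker}q)=\mathsf{Ker}b\wedge\mathsf{Ker}g$ (from $B_{\mathsf{h}}$ trivial) and $e\inv(\mathsf{Im}r)=\mathsf{Im}c\vee\mathsf{Im}d$ (from $A_{\mathsf{h}}$ trivial). The paper simply states these identities and declares the isomorphism, whereas you spell out the intermediate image/kernel computations explicitly; the substance is the same.
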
 
\begin{proof}
The existence of the connecting morphism 
$A_{\star}\to ^{\star}\!\!\!\!B$ follows from Proposition \ref{pro1}(a). Since $B_{\mathsf{h}}=1$, we obtain
$e(\mathsf{Ker}q)=\mathsf{Im}e \wedge  \mathsf{Ker}g=\mathsf{Ker}b \wedge  \mathsf{Ker}g$  whereas because of $A_{\mathsf{h}}=1,$ we get $ e\inv(\mathsf{Im}r) = \mathsf{Im}c \vee  \mathsf{Ker}e=\mathsf{Im}c\vee  \mathsf{Im}d.$ This proves the required isomorphism.  
\end{proof} 
\begin{remark} 
In \cite{B12}, the Proposition \ref{b12} has been derived as a corollary of the salamander lemma. 
\end{remark}
\subsection{Classification problem of exact sequences of $L$-homologies}
For a given double complex $\mathfrak{D}$ and a composable pair of morphisms $X\xrightarrow{f} Y\xrightarrow {g}Z$ of $\mathfrak{D},$ we observe that there are four possible types of exact sequences involving $L$-homologies of $X,$ $Y,$ and $Z:$
$$\mathrm{(I)}\; H_X\preceq H_X \preceq H_X, \mathrm{(II)}\; H_X\preceq H_X \preceq H_Y, \mathrm{(III)}\; H_X\preceq H_Y\preceq H_Y, \mathrm{(IV)}\; H_X\preceq H_Y \preceq H_Z.$$
The Proposition \ref{eexact} below gives examples of exact sequences of $L$-homologies of the types  (II) $H_X\preceq H_X\preceq H_Y$ and (III) $H_X\preceq H_Y\preceq H_Y,$ where $X$ and $Y$ are two different objects of a double complex $\mathfrak{D}.$   
\begin{proposition} 
\label{eexact}
In the double complex $(\ref{dc})$, the following sequences are exact:
\\
$\mathrm{II.}$ 
$\mathrm{(a)}$ $A_{\mathsf{h}}\preceq A_{\star}\preceq\, ^\star\!B;$
$\mathrm{(b)}$ $^{\mathsf{h}}\!\!A_{\mathsf{d}}\preceq\, ^{\mathsf{d}}\!\!A_{\mathsf{v}}\preceq\, ^{\mathsf{v}}\!B_{\mathsf{d}};$ $\mathrm{(c)}$ $^{\mathsf{h}}\!\!A_{\mathsf{d}}\preceq A_{\star}\preceq\, ^{\vee }\!B_{\mathsf{d}};$
$\mathrm{(d)}$ $C_{\mathsf{v}}\preceq C_{\star}\preceq \, ^\star \!\!A;$ \\ $\mathrm{(e)}$ $^{\mathsf{v}}\!C_{\mathsf{d}}\preceq\, ^{\mathsf{d}}\! C_{\mathsf{h}}\preceq\, ^{\mathsf{h}}\!\!A_{\mathsf{d}};$ $\mathrm{(f)}$ $^{\mathsf{v}}\!C_{\mathsf{d}}\preceq  C_{\star} \preceq\, ^\vee \!\!\!A_{\mathsf{d}}$.
\\
$\mathrm{III.}$  
$\mathrm{(a)}$ $C_{\star} \preceq  A_{\mathsf{h}}\preceq A_{\star};$
$\mathrm{(b)}$ $C_{\star} \preceq\,  ^\star \!\!A \preceq A_{\mathsf{v}};$ 
$\mathrm{(c)}$ $C_{\star} \preceq\,   ^\mathsf{h} \!\!A_{\mathsf{d}}\preceq\, ^\mathsf{d}\!\!A_{\mathsf{v}};$
$\mathrm{(d)}$ $A_{\star} \preceq  B_{\mathsf{v}}\preceq  B_{\star};$\\
$\mathrm{(e)}$ $A_{\star} \preceq\,  ^\star \!B\preceq B_{\mathsf{h}};$
$\mathrm{(f)}$ $A_{\star} \preceq\,  ^\mathsf{v} \!B_{\mathsf{d}}\preceq\, ^\mathsf{d}\!B_{\mathsf{h}}.$
\end{proposition}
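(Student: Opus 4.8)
The plan is to handle all twelve sequences through the single sufficient condition for exactness furnished by Proposition \ref{pro1}(c). In each of them two of the three $L$-homologies are attached to one object and the third to an adjacent object, so that one connecting morphism is an identity and the other is one of the structure maps $e$ or $c$; the hypotheses guaranteeing that these canonical morphisms exist are exactly the inclusions recorded in diagram (\ref{a}) and are already subsumed in the two relations $\preceq$ being asserted. Writing a sequence as $U/P\to V/Q\to W/R$ with underlying maps $\alpha$ and $\beta,$ Proposition \ref{pro1}(c) reduces the claim to the one lattice identity $\alpha(U)\vee Q=\beta\inv(R)\wedge V$ of (\ref{ext}), so the entire proof becomes twelve verifications of this equation.

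My first move in each case is to replace every kernel and image of a diagonal composite by that of a single structure map, using $p=ca,$ $r=ec,$ $q=ge$: thus $\mathsf{Ker}q=e\inv(\mathsf{Ker}g),$ $\mathsf{Ker}r=c\inv(\mathsf{Ker}e),$ $\mathsf{Im}p=c(\mathsf{Im}a)$ and $\mathsf{Im}r=e(\mathsf{Im}c).$ After this substitution both sides of (\ref{ext}) are built from a single map applied to joins and meets of the generating subobjects, and the adjunction identities $\alpha\inv\alpha(S)=S\vee\mathsf{Ker}\alpha$ and $\alpha\alpha\inv(T)=T\wedge\mathsf{Im}\alpha$ from Section \ref{two} collapse the resulting composites of direct and inverse images.

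What then remains is reconciling a join, namely $\alpha(U)\vee Q,$ with a meet, namely $\beta\inv(R)\wedge V.$ This is where the modular law enters, applied after establishing the comparability it requires from the double-complex structure. The complex relations give $\mathsf{Im}d\leqslant\mathsf{Ker}e$ and $\mathsf{Im}c\leqslant\mathsf{Ker}f,$ and commuting squares upgrade these to the diagonal inclusions that actually drive the computation, the prototype being $\mathsf{Im}c\leqslant\mathsf{Ker}q$ (that is, $gec=0$), obtained by factoring $ec$ through the incoming vertical map of $B$ and using $gv=0.$ With the correct inclusion in place, modularity identifies the two sides as a common subobject; for instance II.(a) collapses to $\mathsf{Ker}e\vee\mathsf{Im}c$ on both sides, while III.(a) equates the join-of-a-meet $(\mathsf{Ker}e\wedge\mathsf{Im}c)\vee\mathsf{Im}d$ with the meet-of-a-join $(\mathsf{Im}c\vee\mathsf{Im}d)\wedge\mathsf{Ker}e.$

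The hard part is precisely this last step. Since the direct image does not preserve meets and the inverse image does not preserve joins, the two sides of (\ref{ext}) generically present themselves with opposite associativity, and they can be matched only once the single comparability needed for modularity has been pinned down. Isolating, for each of the twelve sequences, the correct zero composite around the diagonal and checking that it yields the required $\leqslant$ is the recurring technical core; granting that small lemma, the rest of every case is routine lattice manipulation.
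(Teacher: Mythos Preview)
Your proposal is correct and follows essentially the same route as the paper: both reduce every case to verifying the single identity (\ref{ext}) of Proposition~\ref{pro1}(c), rewrite the diagonal kernels and images via $q=ge$, $r=ec$, $p=ca$, collapse the resulting $\alpha\inv\alpha$ or $\alpha\alpha\inv$ composites with the adjunction formulas of Section~\ref{two}, and finish with modularity, with II(a) and III(a) serving as the illustrative computations. Your write-up is, if anything, more explicit about the uniform mechanism (one identity map, one structure map $e$ or $c$, and the diagonal inclusion such as $\mathsf{Im}c\leqslant\mathsf{Ker}q$ that licenses modularity), but the argument is the same.
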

\begin{remark}
The examples II(a) and III(a) are the segments of the salamander lemma as proposed in \cite{B12}. 
\end{remark}
\begin{proof}
To prove the existence of a connecting morphism of a sequence, we apply  Proposition \ref{pro1}(a),  whereas for the exactness we verify the condition (\ref{ext}) of the Proposition \ref{pro1}(c). To show the working scheme, here we only prove the exactness of II(a) and III(a).  
	
II(a): For the left hand side of (\ref{ext}), we have 
$\mathsf{Ker}e \vee  (\mathsf{Im}c \vee  \mathsf{Im}d) = \mathsf{Ker}e \vee  \mathsf{Im}c,$
whereas for the right hand side of (\ref{ext}), we get
$$e\inv(\mathsf{Im}r) \wedge  \mathsf{Ker}q=e\inv(e(\mathsf{Im}c))\wedge  \mathsf{Ker}q= (\mathsf{Ker}e \vee  \mathsf{Im}c)\wedge  \mathsf{Ker}q =\mathsf{Ker}e \vee  \mathsf{Im}c.$$

III(a): For the left hand side of  (\ref{ext}), we have $c(\mathsf{Ker}r)  \vee  \mathsf{Im}d =(\mathsf{Ker}e \wedge  \mathsf{Im}c)\vee  \mathsf{Im}d,$ and for the right hand side of (\ref{ext}), we get  $(\mathsf{Im}c \vee  \mathsf{Im}d)\wedge  \mathsf{Ker}e= (\mathsf{Ker}e \wedge  \mathsf{Im}c)\vee  \mathsf{Im}d$ (by modularity).
\end{proof} 
\begin{remark} 
The inclusion diagram (\ref{a}) and the exactness condition (\ref{ext}) confirm the non-existence of any exact sequence of the type (I); whereas, for three different objects $X,$ $Y,$ and $Z$ of a double complex $\mathfrak{D},$  the existence of exact sequence of the type (IV) is unknown.      
\end{remark}           
\begin{definition}
Given a double complex $\mathfrak{D},$ we define

(i) a category  $\mathds{C}\mathrm{om}_{\mathfrak{D}}$ consisting  objects  of $\mathfrak{D}$ and  morphisms of $\mathfrak{D}$ along with identity morphisms of each object of $\mathfrak{D};$  

(ii) a category    $\mathds{H}\mathrm{lg}_{\mathfrak{D}}$ having  objects as $L$-homologies of  $\mathfrak{D}$ and morphisms as canonical morphisms between $L$-homologies (in the sense of Proposition \ref{pro1}(a)).  
\end{definition}
\begin{definition}\label{func} 
We define a functor $\mathcal{F}\colon \mathds{H}\mathrm{lg}_{\mathfrak{D}}\to \mathds{C}\mathrm{om}_{\mathfrak{D}}$
as follows: 
$$
\mathcal{F}(H_A)=A; \quad
\mathcal{F}(H_A\to H_B)=\begin{cases}
1_A, &  A= B,\\
A\to B,& \mathrm{otherwise}.  
\end{cases}
$$
For every object $A$ of $\mathds{C}\mathrm{om}_{\mathfrak{D}},$ the \emph{fibre} of $\mathcal{F}$ at $A,$ denoted by $\mathsf{Fib}(A),$ is the subcategory of $\mathds{H}\mathrm{lg}_{\mathfrak{D}}$ consisting of those
objects and morphisms in $\mathds{H}\mathrm{lg}_{\mathfrak{D}}$ which, by $\mathcal{F},$
are mapped to $A$ and $1_A,$ respectively.
\end{definition}
\begin{remark}
We notice that $\mathsf{Fib}(A)$ 
is nothing but the set $\mathsf{LH}_A$ of $L$-homologies of the object $A.$ 
\end{remark}
\begin{definition}\label{gf}
A functor $\mathcal{G} \colon \mathds{A} \to \mathds{X}$ is said to be a \emph{Grothendieck fibration}, if for every object $S$ in $\mathds{A},$ and every morphism $\phi \colon X \to \mathcal{G}(S)$ in $ \mathds{X},$ there exists a morphism $f \colon A \to S$ in $\mathds{A}$ such that:

(a) ${\mathcal{G}(f) = \phi}$ (and in particular $\mathcal{G}(A) = X$);

(b) given a morphism $g \colon B \to S$ in $\mathds{A}$ and a morphism $\psi \colon \mathcal{G}(B) \to X$ in $\mathds{X}$ with $\mathcal{G}(g) = \phi \psi,$ there exists a unique morphism $h \colon B \to A$ with $\mathcal{G}(h) = \psi$ and $fh = g.$ 
\end{definition}
\begin{theorem}
The functor $\mathcal{F}$ in (\ref{func}), is a faithful amnestic Grothendieck fibration. Furthermore, $\mathcal{F}$ has a left adjoint and a right adjoint:
$$
\xymatrix@=70pt{ \mathds{H}\mathrm{lg}_{\mathfrak{D}}\ar[d]|{\mathcal{F}}\\   \mathds{C}\mathrm{om}_{\mathfrak{D}}\ar@/^25pt/[u]_{\mathcal{I}}\ar@{}@<11pt>[u]|{\dashv}\ar@/_25pt/[u]^-{\mathcal{T}}\ar@{}@<-11pt>[u]|{\dashv}
 } 
$$   
where to each object $A$ of $\mathds{C}\mathrm{om}_{\mathfrak{D}},$ the functors $\mathcal{I}$ and $\mathcal{T}$ assign respectively the bottom and top elements of $\mathsf{Fib}(A).$   
\end{theorem}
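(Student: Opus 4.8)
The theorem makes four assertions about the functor $\mathcal{F}$: that it is faithful, amnestic, a Grothendieck fibration, and possessed of both a left and right adjoint (namely $\mathcal{I}$ and $\mathcal{T}$). I would dispatch these in order of increasing difficulty, treating the two easy properties first, then the fibration, then extracting the adjunctions as a consequence.

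**Faithfulness and amnesticity.**
First I would verify \emph{faithfulness}. Fix two objects $H_A, H_A'$ of $\mathds{H}\mathrm{lg}_{\mathfrak{D}}$ lying over the same object $A$ (if they lie over different objects there is at most one morphism to compare anyway). By Definition \ref{binary2} and Proposition \ref{pro1}(a), a morphism $H_A \to H_A'$ is a \emph{canonical} morphism, and such a morphism is uniquely determined by the data $(U,P,V,Q)$ through the universal properties of $\pi_P$ and $\iota_V$ — there is at most one canonical morphism between any two fixed $L$-homologies. Hence $\mathcal{F}$ cannot identify two distinct parallel morphisms, and faithfulness follows essentially from the uniqueness clauses already proved in Proposition \ref{pro1}(a). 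For \emph{amnesticity}, I would recall that a functor is amnestic when any isomorphism in the domain that maps to an identity must itself be an identity. Suppose $\phi\colon H_A \to H_A'$ is an isomorphism in $\mathds{H}\mathrm{lg}_{\mathfrak{D}}$ with $\mathcal{F}(\phi)=1_A$; then both $H_A$ and $H_A'$ live in $\mathsf{Fib}(A)=\mathsf{LH}_A$. The canonical morphism underlying $\phi$ arises from inclusions $U\leqslant U'$ and $V'\leqslant V$ of subobjects; an isomorphism of subquotients induced by such inclusions forces $U=U'$ and $V=V'$ as elements of the lattices $\mathsf{L}_{\mathsf{Ker}}$ and $\mathsf{L}_{\mathsf{Im}}$, so $H_A=H_A'$ and $\phi=1_{H_A}$.

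**The fibration.**
This is the main structural step and, I expect, the principal obstacle. Given an object $H_B$ of $\mathds{H}\mathrm{lg}_{\mathfrak{D}}$ (so $H_B = U_B/V_B \in \mathsf{LH}_B$) and a morphism $\phi\colon X \to \mathcal{F}(H_B)=B$ in $\mathds{C}\mathrm{om}_{\mathfrak{D}}$, I must produce a \emph{cartesian lift}: a morphism $H_X \to H_B$ in $\mathds{H}\mathrm{lg}_{\mathfrak{D}}$ over $\phi$ satisfying the universal property of Definition \ref{gf}(b). The case $X=B$, $\phi=1_B$ is trivial (take the identity). For a genuine morphism $\phi\colon X\to B$ of $\mathfrak{D}$, the natural candidate for the domain is the $L$-homology of $X$ obtained by pulling back $U_B/V_B$ along $\phi$ — that is, setting the numerator to $\phi^{-1}(U_B)$ and denominator to $\phi^{-1}(V_B)$, intersected/joined appropriately so as to land in $\mathsf{L}_{\mathsf{Ker}}$ and $\mathsf{L}_{\mathsf{Im}}$ for the object $X$. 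The delicate point is that $\phi^{-1}(U_B)$ and $\phi^{-1}(V_B)$ need not be members of the prescribed lattices $\mathsf{L}_{\mathsf{Ker}}^X$ and $\mathsf{L}_{\mathsf{Im}}^X$ at $X$; the hard part will be identifying the correct \emph{cartesian} choice of $L$-homology over $X$ among the eighteen of Proposition \ref{ho2} and verifying it satisfies the universal property. For the universal property I would take any $g\colon H_Z\to H_B$ over $\mathcal{F}(g)=\phi\psi$ with $\psi\colon Z\to X$, and use the adjunction $\phi(S)\leqslant T \Leftrightarrow S\leqslant \phi^{-1}(T)$ from the Preliminaries together with the functoriality of direct/inverse images to produce the unique filler $h\colon H_Z \to H_X$, invoking Proposition \ref{pro1}(a) to realise $h$ as a canonical morphism and the faithfulness just established to guarantee its uniqueness.

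**The two adjoints.**
Finally, I would establish the adjunctions $\mathcal{I}\dashv \mathcal{F}\dashv \mathcal{T}$. By Theorem \ref{poset}, each fibre $\mathsf{Fib}(A)=(\mathsf{LH}_A,\preceq)$ has bottom element $^\star\!\!A$ and top element $A_\star$, so $\mathcal{I}$ and $\mathcal{T}$ are well defined on objects; on morphisms they send a morphism $A\to B$ of $\mathds{C}\mathrm{om}_{\mathfrak{D}}$ to the induced canonical morphism between bottom (resp.\ top) elements, which exists by Proposition \ref{pro1}(a) and the inclusions displayed in the proof of Theorem \ref{poset}. To exhibit $\mathcal{I}\dashv \mathcal{F}$, I would produce for each $H_A$ the counit $\mathcal{I}\mathcal{F}(H_A) = {}^\star\!\!A \to H_A$, which is exactly the morphism asserting that $^\star\!\!A$ is the bottom element of the poset, and check that it satisfies the universal property: any morphism $\mathcal{I}(X)\to H_A$ over $\phi\colon X\to A$ factors uniquely through this counit. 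Dually, $\mathcal{F}\dashv\mathcal{T}$ uses the unit $H_A \to A_\star = \mathcal{T}\mathcal{F}(H_A)$ coming from the top element. In both cases the triangle identities reduce to the fact that $^\star\!\!A$ and $A_\star$ are genuinely extremal in each fibre, so the adjunctions follow formally once the fibration structure and the fibrewise extremality of Theorem \ref{poset} are in hand.
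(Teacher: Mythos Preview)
Your treatment of faithfulness, amnesticity, and the two adjunctions is essentially the paper's (the paper checks the hom-set bijection rather than the unit/counit, but that is immaterial). One small slip: a canonical morphism $U/V\to U'/V'$ over $1_A$ requires $V\leqslant V'$, not $V'\leqslant V$; the conclusion for amnesticity is unaffected because the inverse, being itself a morphism of $\mathds{H}\mathrm{lg}_{\mathfrak{D}}$, is also canonical and supplies the reverse inclusions.

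The genuine gap is in the fibration step. Your pullback construction---take $\phi^{-1}(U_B)/\phi^{-1}(V_B)$ and then coerce it into $\mathsf{L}_{\mathsf{Ker}}^X$, $\mathsf{L}_{\mathsf{Im}}^X$---does not in general produce an $L$-homology (for instance if $V_B\geqslant\mathsf{Im}\phi$, as happens for $H_B=B_{\star}$, then $\phi^{-1}(V_B)=X$), and you leave the ``hard part'' unresolved. The paper sidesteps this entirely: for a non-identity $\phi\colon X\to B$, the cartesian lift of \emph{any} $H_B$ has domain the bottom element ${}^{\star}\!X$, with lifting morphism the composite ${}^{\star}\!X\to X_{\star}\to{}^{\star}\!B\to H_B$. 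The required filler $H_Z\to{}^{\star}\!X$ over $\psi$ is then $H_Z\to Z_{\star}\to{}^{\star}\!X$, which exists because of the connecting morphisms $P_{\star}\to{}^{\star}\!Q$ available over every arrow $P\to Q$ of $\mathfrak{D}$; uniqueness and the equation $fh=g$ follow from faithfulness. In effect the fibration is a formal consequence of each fibre having a bottom element together with these connecting morphisms---exactly the data encoded in your left adjoint $\mathcal{I}$---so had you established $\mathcal{I}\dashv\mathcal{F}$ first, the fibration would have come almost for free.
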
  
\begin{proof}
In the category $\mathds{H}\mathrm{lg}_{\mathfrak{D}},$ we notice that whenever a morphism exists between any two $L$-homologies, it is unique. Therefore, the functor $\mathcal{F}$ obviously satisfy the property: $(\mathcal{F}(f)=\mathcal{F}(g))\Rightarrow(f=g),$ for any two parallel morphisms $f$ and $g$ in $\mathds{H}\mathrm{lg}_{\mathfrak{D}}$, i.e. $\mathcal{F}$ is faithful, as asserted.

Let $f\colon U/V\to U'/V'$ be an isomorphism in $\mathds{H}\mathrm{lg}_{\mathfrak{D}}$ such that $\mathcal{F}(f)=1_A.$ By definition of $\mathcal{F},$ this implies $U/V$ and $U'/V'$ are in $\mathsf{Fib}(A),$ and isomorphism forces  $f=1_{U/V},$ i.e. $\mathcal{F}$ is amnestic. 
 
To prove $\mathcal{F}$ is a Grothendieck fibration, let us consider the following two cases to verify the conditions of the Definition \ref{gf}:

$\phi$ an identity morphism: We take $f$ as $1_S,$ and $h$ will be the unique canonical morphism $g$ itself. 
  
$\phi$ is not an identity morphism: We take $A$ as $^{\star}\!\!X$ and the morphism $f$ is the (unique) composite\\ $^{\star}\!\!X\to  X_{\star}\to  ^{\star}\!\!\!\!S\to S.$  Since for any morphism $P\to Q$ in $\mathds{C}\text{om}_{\mathfrak{D}}$ there exists a unique canonical morphism $P_{\star}\to ^{\star}\!\!\!\!Q$ in  $\mathds{H}\mathrm{lg}_{\mathfrak{D}},$ the existence of the unique morphism $h$ with the desired property follows.  
    
We prove the adjunction $\mathcal{I}\dashv \mathcal{F},$ and the proof of $\mathcal{F}\dashv \mathcal{T} $ is similar. To prove $\mathcal{I}\dashv \mathcal{F},$ it is sufficient to prove the map
$$\phi_{X,H_A}\colon \mathrm{hom}_{\mathds{H}\mathrm{lg}_{\mathfrak{D}}} (^{\star}\!X,H_A)\to \mathrm{hom}_{\mathds{C}\text{om}_{\mathfrak{D}}} (X, A)$$
is a bijection for each object $H_A$ in $\mathds{H}\mathrm{lg}_{\mathfrak{D}}.$ But the bijection follows from the fact that for each $H_A,$ there is a unique canonical morphism $f\colon ^{\star}\!\!X\to H_A$ and its image $\phi_{X,H_A}(f)$ (by definition of $\mathcal{F}$) is the map $X\to A$ of $\mathds{C}\text{om}_{\mathfrak{D}}.$
\end{proof}  
\begin{remark}
The functor $\mathcal{F}$ is an example of a  Z. Janelidze form in the sense of  \cite{JW14}. For further applications of Janelidze forms, see \cite{GJ19} and the references given there.   
\end{remark}   
  
\section*{Acknowledgement}
 I thank George Janelidze for useful comments and  suggestions.

\end{document}